\newtheorem{lemma}{Lemma}
\newtheorem{theorem}[lemma]{Theorem}
\newtheorem*{theorem*}{Theorem}
\newtheorem{question}{Question}
\newtheorem*{proposition*}{Proposition}
\newcounter{MainTheoremCounter}
\newtheorem{Maintheorem}[MainTheoremCounter]{Theorem}
\newtheoremstyle{newdefinitionstyle}{2mm}{2mm}{}{}{\bfseries}{.}{.5em}{}
\theoremstyle{newdefinitionstyle}
\newtheorem*{definition*}{Definition}
\newtheorem*{conjecture*}{Conjecture}
\newtheorem{remark}[lemma]{Remark}
\newtheorem*{remarks*}{Remarks}
\newtheorem*{claim*}{Claim}
\newtheorem{example}[lemma]{Example}
\newcommand{\C}{{\mathbb C}}
\newcommand{\N}{{\mathbb N}}
\renewcommand{\P}{{\mathbb P}}
\newcommand{\Q}{{\mathbb Q}}
\newcommand{\R}{{\mathbb R}}
\newcommand{\T}{{\mathbb T}}
\newcommand{\Z}{{\mathbb Z}}
\definecolor{orange}{rgb}{1,0.5,0}
\newcommand{\stkout}[1]{\ifmmode\text{\sout{\ensuremath{#1}}}\else\sout{#1}\fi}
\author[A.\ Koutsogiannis]{Andreas Koutsogiannis}
\address{Department of Mathematics\\ The Ohio State University\\
Columbus, OH, USA}
\email{koutsogiannis.1@osu.edu}
\author[A.\ N.\ Le]{Anh N. Le}
\address{Department of Mathematics\\ Northwestern University\\
Evanston, IL, USA}
\email{anhle@math.northwestern.edu}
\author[J.\ Moreira]{Joel Moreira}
\address{Mathematics Institute\\ University of Warwick\\
Coventry, UK}
\email{joel.moreira@warwick.ac.uk}
\author[F.\ K.\ Richter]{Florian K. Richter}
\address{Department of Mathematics\\ Northwestern University\\
Evanston, IL, USA}
\email{fkr@northwestern.edu}
\begin{document}

\begin{abstract}
 Let $T$ be a measure preserving $\Z^\ell$-action on the probability space $(X,{\mathcal B},\mu),$ $q_1,\dots,q_m\colon\R\to\R^\ell$ vector polynomials, and $f_0,\dots,f_m\in L^\infty(X)$. For any $\epsilon > 0$ and multicorrelation sequences of the form $\displaystyle\alpha(n)=\int_Xf_0\cdot T^{ \lfloor q_1(n) \rfloor }f_1\cdots T^{ \lfloor q_m(n) \rfloor }f_m\;d\mu$
we show that there exists a nilsequence $\psi$  for which
$\displaystyle\lim_{N - M \to \infty} \frac{1}{N-M} \sum_{n=M}^{N-1} |\alpha(n) - \psi(n)| \leq \epsilon$
and
$\displaystyle\lim_{N \to \infty} \frac{1}{\pi(N)} \sum_{p \in \P\cap[1,N]} |\alpha(p) - \psi(p)| \leq \epsilon.$
This result simultaneously generalizes previous results of Frantzikinakis \cite{Frantzikinakis15b} and the authors \cite{Koutsogiannis_multi_2018,Le_Moreira_Richter_2020}.

\end{abstract}

\subjclass[2010]{Primary: 37A45, 37A15; Secondary: 11B30}

\keywords{Multicorrelation sequences, nilsequences, integer part polynomials, prime numbers.}

\title[]{Structure of multicorrelation sequences with integer part polynomial iterates along primes }

\vspace*{-1cm}
\maketitle

\section{Introduction and main result}
\label{sec_intro}

\noindent Since Furstenberg's ergodic theoretic proof of Szemer\'edi's theorem \cite{Furstenberg77}, there has been much interest in understanding the structure of \emph{multicorrelation sequences}, i.e., sequences of the form
\begin{equation}\label{eq_BHKmulticorrelation}
    \alpha(n)=\int_X f_0\cdot T^nf_1\cdots T^{kn}f_k\;d\mu,\;\;n\in\N,
\end{equation}
where $(X,{\mathcal B},\mu,T)$ is a measure preserving system and $f_0,\dots,f_k\in L^\infty(X)$.
The first to provide deeper insight into the algebraic structure of such sequences were Bergelson, Host, and Kra, who showed in \cite{Bergelson_Host_Kra05} that if the system $(X, \mu, T)$ is ergodic then for any multicorrelation sequence $\alpha$ as in \eqref{eq_BHKmulticorrelation} there exists a uniform limit of $k$-step nilsequences  $\phi$ such that
\begin{equation}\label{eq_BHK}
    \lim_{N-M\to\infty}\frac1{N-M}\sum_{n=M}^{N-1}\big|\alpha(n)-\phi(n)\big|=0.
\end{equation}

Here, a \emph{$k$-step nilsequence} is a sequence of the form $\psi(n)=F(g^nx),$ $n\in\N,$ where $F$ is a continuous function on a $k$-step nilmanifold $X=G/\Gamma$,\footnote{A \emph{$k$-step nilmanifold} is a homogeneous space $X=G/\Gamma$, where $G$ is a $k$-step nilpotent Lie group and $\Gamma$ is a discrete and co-compact subgroup of $G$.} $g\in G,$ $x\in X$. A \emph{uniform limit of $k$-step nilsequences} is a sequence $\phi$ such that for every $\epsilon>0$ there exists a $k$-step nilsequence $\psi$ with $\operatorname{sup}_{n\in\N}|\phi(n)-\psi(n)|\leq\epsilon$.

\medskip

Later, Leibman extended the result of Bergelson, Host and Kra to polynomial iterates in \cite{Leibman10}, and removed the ergodicity assumption in \cite{Leibman15}.
Another extension was obtained by the second author in \cite{Le17}, and independently by Tao and Ter\"av\"ainen in \cite{Tao_Teravainen_17}, answering a question raised in \cite{Frantzikinakis2016}. There, it was shown that in addition to \eqref{eq_BHK} one also has
\begin{equation}\label{eq_BHKprimes}
    \lim_{N\to\infty}\frac1{\pi(N)}\sum_{p\in\P\cap[1,N]}\big|\alpha(p)-\phi(p)\big|=0,
\end{equation}
where $\P$ denotes the set of prime numbers, $[1,N]\coloneqq \{1,\dots,N\}$, and $\pi(N)\coloneqq |\P\cap[1,N]|.$

\medskip

The proofs of all the aforementioned results depend crucially on the 
structure theory of Host and Kra, who established in \cite{Host_Kra05} that the building blocks of the factors that control multiple ergodic averages are nilsystems. 
Since the analogous factors for $\Z^\ell$-actions are unknown, extending the results above from $\Z$-actions to $\Z^\ell$-actions proved to be a challenge. Nevertheless, in \cite{Frantzikinakis15b} Frantzikinakis concocted a different approach and gave a description of the structure of multicorrelation sequences of $\Z^\ell$-actions, which we now explain.

\medskip

Henceforth, let $\ell\in\N$ and let $T$ be a measure preserving $\Z^\ell$-action on a probability space $(X,{\mathcal B},\mu)$.
The system $(X,{\mathcal B},\mu,T)$ gives rise to a more general class of multicorrelation sequences,
\begin{equation}
\label{eq_multicorrelation}
  \alpha(n)=\int_Xf_0\cdot T^{ q_1(n)}f_1\cdots T^{ q_m(n)}f_m\;d\mu, \;\;n\in\N,
\end{equation}
where $q_1,\dots,q_m\colon\Z\to\Z^\ell$ are integer-valued vector polynomials and $f_0,\dots,f_m\in L^\infty(X)$.
Note that \eqref{eq_BHKmulticorrelation} corresponds to the special case of \eqref{eq_multicorrelation} when $\ell=1$ and $q_i(n)=in$.
Frantzikinakis showed in  \cite{Frantzikinakis15b} that for every $\alpha$ as in \eqref{eq_multicorrelation} and every $\epsilon>0$ there exists a $k$-step nilsequence $\psi$ such that
    \begin{equation}\label{eq_weakBHK}
    \lim_{N-M\to\infty}\frac1{N-M}\sum_{n=M}^{N-1}\big|\alpha(n)-\psi(n)\big|\leq\epsilon,
    \end{equation}
where $k$ only depends on $\ell$, $m$, and the maximal degree among the polynomials  $q_1,\ldots,q_m$.
Moreover, in the special case where each polynomial iterate is linear, it was proved in \cite{Frantzikinakis15b} that one can take $k=m$.
It is still an open question whether in \eqref{eq_weakBHK} one can replace $\epsilon$ with $0$ after replacing the nilsequence $\psi$ with a uniform limit of such sequences (see \cref{question_zeroepsilon} in \cref{sec:open_questions}).

For $x\in\R$ we denote by $\lfloor x\rfloor$ the largest integer which is smaller or equal to $x$, while for $x=(x_1,\dots,x_\ell)\in\R^\ell$ we let $\lfloor x\rfloor\coloneqq(\lfloor x_1\rfloor,\dots,\lfloor x_\ell\rfloor)$.
In \cite{Koutsogiannis_multi_2018}, the first author extended Frantzikinakis' results to all multicorrelation sequences of the form
\begin{equation}\label{eq:alpha}
\alpha(n)=\int_Xf_0\cdot T^{\lfloor q_1(n)\rfloor}f_1\cdots T^{\lfloor q_m(n)\rfloor} f_m\;d\mu,\;\;n\in\N,
\end{equation}
where $q_1,\dots,q_m\colon\R\to\R^\ell$ are real-valued vector polynomials.

More recently, the last three authors showed that the conclusion of Frantzikinakis' result also holds along the primes:

\begin{theorem}[{\cite[Theorems A and B]{Le_Moreira_Richter_2020}}]\label{thm_B}For every $\ell, m,d\in \mathbb{N}$ there exists $k\in\N$ with the following property.
For any polynomials $q_1,\ldots,q_m\colon\Z\to\Z^\ell$ with degree at most $d$, measure preserving $\Z^\ell$-action $T$ on a probability space $(X,\mathcal{B},\mu)$, functions $f_0,f_1,\ldots,f_m\in L^\infty(X)$, $\varepsilon>0,$ $r\in \mathbb{N}$ and $s\in \mathbb{Z},$ letting $\alpha$ be as in \eqref{eq_multicorrelation},
there exists a $k$-step nilsequence $\psi$  satisfying \eqref{eq_weakBHK}
and
\begin{equation}\label{primes_ap}
\lim_{N\to\infty}\frac{1}{\pi(N)}\sum_{p\in\mathbb{P}\cap[1,N]}|\alpha(rp+s)-\psi(rp+s)|\leq \varepsilon.
\end{equation}
In the special case $d=1$ one can choose $k=m$.
\end{theorem}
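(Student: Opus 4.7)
The plan is to combine Frantzikinakis's Cesaro approximation theorem (the $\varepsilon$-nilsequence statement underlying \eqref{eq_weakBHK}) with a Gowers-uniformity transference from the integers to the primes, in the spirit of Green--Tao--Ziegler. First, apply Frantzikinakis's result from \cite{Frantzikinakis15b} to the multicorrelation sequence $\alpha$ to obtain a $k$-step nilsequence $\psi$ with $k=k(\ell,m,d)$ satisfying \eqref{eq_weakBHK}. The remaining task, which is the heart of the theorem, is to show that the \emph{same} $\psi$ also satisfies the prime variant \eqref{primes_ap}; the passage from $p$ to $rp+s$ is then handled by a standard W-trick along residue classes modulo $rW$.

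Set $\beta(n)=\alpha(n)-\psi(n)$. I would first establish a ``$U^s$-control of multicorrelations'' step: using a PET / van der Corput induction on the polynomial iterates $q_1,\dots,q_m$, one bounds $\limsup_{N\to\infty}\tfrac{1}{N}\sum_{n\leq N}|\beta(n)h(n)|$ by a positive power of $\|h\|_{U^s}$, uniformly over $1$-bounded sequences $h$, where $s=s(k,m,d)$. By a Cauchy--Schwarz--Gowers argument, the prime average in \eqref{primes_ap} is then controlled by $\tfrac{1}{N}\sum_{n\leq N}(\Lambda_{W,a}(n)-1)|\beta(n)|^{2}$, where $\Lambda_{W,a}$ denotes the W-tricked von Mangoldt function on the residue class $a$ modulo $W$. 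The ``$1$''-contribution is absorbed by the Cesaro bound already provided by \eqref{eq_weakBHK}, while the ``$\Lambda_{W,a}-1$''-contribution is killed by the Green--Tao--Ziegler theorem that this function is $U^s$-uniform for every fixed $s$.

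The main obstacle is the $U^s$-control step in the $\Z^\ell$ setting: for $\Z$-actions it follows cleanly from the Host--Kra inverse theorem, but here the lack of a clean structure theory for $\Z^\ell$-actions forces a PET-style induction combined with Frantzikinakis's indirect averaging argument to replace the structure theorem. This is what determines the exponent $s$, and hence the step $k$. Managing the polynomial dependencies while keeping the $U^s$-exponent finite and effective is the delicate technical step; one also has to be careful that the nilsequence $\psi$ is genuinely $k$-step (rather than a uniform limit of such) so that Gowers-norm bounds on $\psi$ are available.

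For the special case $d=1$, Frantzikinakis's sharper linear result in \cite{Frantzikinakis15b} already yields $\psi$ as an $m$-step nilsequence, since $m$-linear averages of commuting transformations are controlled by the $m$-th Host--Kra seminorm. The $U^{m+1}$-uniformity of $\Lambda_{W,a}-1$ then closes the argument with $k=m$.
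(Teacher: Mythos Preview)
This statement is not proved in the present paper at all: \cref{thm_B} is quoted from \cite{Le_Moreira_Richter_2020} and is used as a black box in the proof of \cref{thm:flow} (and hence of \cref{thm:main}). So there is no ``paper's own proof'' to compare your proposal against; the paper's contribution is the passage from integer polynomial iterates to real (integer-part) polynomial iterates via the suspension-flow argument, with \cref{thm_B} taken as input.

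That said, your high-level plan (Frantzikinakis's $\varepsilon$-approximation to get $\psi$, then a Green--Tao--Ziegler transference to primes via the $W$-trick) is indeed the route taken in \cite{Le_Moreira_Richter_2020}. However, the specific mechanism you describe has a gap. You claim a bound of the form
\[
\limsup_{N\to\infty}\frac{1}{N}\sum_{n\leq N}\big|\beta(n)\,h(n)\big|\;\ll\;\|h\|_{U^s}^{c}
\]
for all $1$-bounded $h$, but with the absolute value inside the sum no such estimate can hold: taking $h\equiv 1$ (or any sequence with tiny $U^s$ norm but large support) shows the left side need not be small. What PET/van~der~Corput actually yields is control of \emph{signed} weighted averages $\frac{1}{N}\sum_n w(n)\,\prod_i T^{q_i(n)}f_i$ by $\|w\|_{U^s}$, and the passage from this to the \emph{absolute-value} prime average $\frac{1}{\pi(N)}\sum_p|\beta(rp+s)|$ requires an additional argument (roughly: dualizing against an auxiliary bounded nilsequence-type phase and showing that both $\alpha$ and $\psi$ have the anti-uniformity needed to kill correlations with $\Lambda_{W,a}-1$). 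Your ``Cauchy--Schwarz--Gowers'' reduction to $\frac{1}{N}\sum_n(\Lambda_{W,a}(n)-1)|\beta(n)|^2$ does not follow in the way you state, and in any case $|\beta|^2$ is not a priori the kind of object controlled by $U^s$ norms. This is precisely the ``delicate technical step'' you flag, and it is not addressed by the outline as written.
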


Our main theorem simultaneously generalizes the main results from \cite{Koutsogiannis_multi_2018} and \cite{Le_Moreira_Richter_2020}.

\begin{Maintheorem}\label{thm:main}
For every $\ell, m,d\in \mathbb{N}$ there exists $k\in\N$ with the following property.
For any polynomials $q_1,\ldots,q_m\colon\R\to\R^\ell$ with degree at most $d$, any measure preserving $\Z^\ell$-action $T$ on a probability space $(X,\mathcal{B},\mu)$, functions $f_0,f_1,\ldots,f_m\in L^\infty(X)$, $\varepsilon>0,$ $r\in \mathbb{N}$ and $s\in \mathbb{Z},$ letting $\alpha$ be as in \eqref{eq:alpha},
there exists a $k$-step nilsequence $\psi$ satisfying \eqref{eq_weakBHK} and \eqref{primes_ap}.
In the special case $d=1$ one can choose $k=m$.
\end{Maintheorem}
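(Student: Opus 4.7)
My plan is to reduce \cref{thm:main} to \cref{thm_B} by an extension argument that eliminates the floor functions in favor of honest integer‐polynomial iterates on an enlarged system. This is the strategy pioneered by Frantzikinakis \cite{Frantzikinakis15b} and adapted by Koutsogiannis \cite{Koutsogiannis_multi_2018} to handle integer parts in the interval setting; the main novelty needed here is to carry the reduction out in a way that is \emph{simultaneously} compatible with prime averages.

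The first step, and the technical core of the argument, is to prove the following approximation lemma: for every $\varepsilon > 0$ there exist $L, D \in \N$ (depending only on $\ell, m, d$), a $\Z^L$-system $(\tilde X, \tilde\mu, \tilde T)$ of the form $\tilde X = X \times \T^D$ extending $(X,\mu,T)$, integer-valued polynomials $\tilde q_1, \ldots, \tilde q_m \colon \Z \to \Z^L$ of degree at most $d$, and functions $\tilde f_0, \ldots, \tilde f_m \in L^\infty(\tilde X)$, such that the multicorrelation sequence
\[
\tilde\alpha(n) \;=\; \int_{\tilde X} \tilde f_0 \cdot \tilde T^{\tilde q_1(n)} \tilde f_1 \cdots \tilde T^{\tilde q_m(n)} \tilde f_m \d\tilde\mu
\]
is $(\varepsilon/2)$-close to $\alpha$ in both the interval and prime Cesàro $L^1$ norms. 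The construction exploits the decomposition $q_i(n) = \lfloor q_i(n) \rfloor + \{q_i(n)\}$ and the fact that the joint fractional parts $(\{q_1(n)\}, \ldots, \{q_m(n)\})$ evolve as a polynomial orbit on a torus. By enlarging the system by the corresponding torus factors and approximating the indicator of $[0,1)^\ell$ by continuous functions (via, e.g., Fejér kernels), one can realize $T^{\lfloor q_i(n) \rfloor} f_i$ as an integer-polynomial iterate on $\tilde X$ (with the $\tilde q_i$ of the same degree as the $q_i$) composed with a continuous function of $\{q_i(n)\}$. The approximation error is controlled, in the interval average by Weyl equidistribution, and in the prime average by Vinogradov/Green--Tao-type equidistribution of polynomial sequences along primes (and, more generally, along $\{rp + s : p \in \P\}$, which is why the statement is formulated with $r, s$).

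Once this reduction is in place, I apply \cref{thm_B} to $\tilde\alpha$ on the extended $\Z^L$-system with polynomials of degree $\leq d$. This produces a $k$-step nilsequence $\psi$, with $k$ depending only on $L, m, d$ (hence only on $\ell, m, d$), approximating $\tilde\alpha$ to within $\varepsilon/2$ in both the interval and prime norms (the latter along $rp+s$). Combining the two approximations via the triangle inequality yields the desired nilsequence for $\alpha$. In the linear case $d = 1$, I need to check additionally that the extension does not raise the degree of the polynomials — which it does not, since it only introduces torus rotations by the linear coefficients of the $q_i$ — so that the $d=1$ case of \cref{thm_B} still applies and gives $k = m$.

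The main obstacle is the approximation lemma in the first step: ensuring that \emph{the same} extended system and \emph{the same} integer polynomials $\tilde q_i$ yield a uniform $\varepsilon/2$ approximation in both norms. In the interval setting this is essentially already in \cite{Koutsogiannis_multi_2018}; the genuinely new input is the prime equidistribution of polynomial fractional parts, which must be compatible with the sifted progressions $\{rp+s\}$. Once this is established in a sufficiently robust form, the rest of the argument is a routine combination with \cref{thm_B} and the triangle inequality.
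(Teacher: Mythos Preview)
Your plan is correct and is essentially the paper's approach: extend the system so that the integer-part iterates become genuine integer-polynomial iterates on the extension, control the approximation error via equidistribution of the fractional parts $\{q_{i,j}(n)\}$ both along intervals (Weyl) and along $rp+s$ (Rhin/Vinogradov), and then invoke \cref{thm_B}. The paper organizes the extension step slightly differently: it first passes to a multidimensional suspension flow $Y=X\times[0,1)^{m\ell}$ with commuting $\R^\ell$-actions $S_1,\dots,S_m$, proves an intermediate result (\cref{thm:flow}) for such flows by writing $S_i^{q_i(n)}=\prod_h (S_i^{a_{i,j,h}e_j})^{n^h}$ as an integer-polynomial iterate of a $\Z^{d+1}$-action, and only then appeals to \cref{thm_B}; the error is handled not by Fej\'er-type smoothing but by restricting $\hat f_0$ to $X\times[0,\delta]^{m\ell}$ and using \cref{lem:equidistributed} to bound the density of $n$ with $\{q_{i,j}(n)\}\in[1-\delta,1)$. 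Unwinding the suspension flow and \cref{thm:flow} gives exactly the $\Z^L$-extension $\tilde X=X\times\T^{m\ell}$ you describe, so the two routes coincide in substance; the paper's packaging via flows just makes the ``realize $T^{\lfloor q_i(n)\rfloor}f_i$ as an integer-polynomial iterate'' step completely explicit, which in your outline is the part left most schematic.
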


The proof of \cref{thm:main}, presented in the next section, follows closely the strategy implemented in \cite{Koutsogiannis_multi_2018}, but uses \cref{thm_B} instead of Walsh's theorem \cite{Walsh12} as a blackbox.

\begin{remark}\label{remark_commuting}
Both Theorems \ref{thm_B} and \ref{thm:main} are equivalent to seemingly stronger versions involving commuting actions.
We say that two actions $T_1$ and $T_2$ of a group $G$ commute if for every $g,h\in G$ we have $T_1^g\circ T_2^h=T_2^h\circ T_1^g$.
When $G$ is an abelian group, a collection of $m$ commuting $G$-actions $T_1,\dots,T_m$ can be identified with a single $G^m$-action $T$ via $T^{(g_1,\dots,g_m)}=T_1^{g_1}\cdots T_m^{g_m}$.
Using this observation, and the identification $(\Z^\ell)^m=\Z^{\ell m}$, one sees that, given commuting measure preserving $\Z^\ell$-actions $T_1,\dots,T_m$ in a probability space $(X,\mu,T)$, \cref{thm_B} holds when \eqref{eq_multicorrelation} is replaced by
\begin{equation}\label{eq_multicorrelationcommuting}
    \alpha(n)=\int_Xf_0\cdot T_1^{ q_1(n)}f_1\cdots T_m^{ q_m(n)}f_m\;d\mu,
\end{equation}
and \cref{thm:main} holds when \eqref{eq:alpha} is replaced by
$$\alpha(n)=\int_Xf_0\cdot T_1^{\lfloor q_1(n)\rfloor}f_1\cdots T_m^{\lfloor q_m(n)\rfloor} f_m\;d\mu.$$
\end{remark}

\begin{remark}
Let $\lceil x\rceil$ and $[x]$ denote the smallest integer $\geq x$ and the closest integer to $x$, respectively. Using the relations $\lceil x\rceil=-\lfloor -x\rfloor$ and $[x]=\lfloor x+1/2\rfloor$,
we see that \cref{thm:main} remains true if \eqref{eq:alpha} is replaced by
\begin{equation*}\label{eq_multicorrelation_R_mixandmatch}
\alpha(n)=\int_Xf_0\cdot T^{[ q_1(n)]_1}f_1\cdots T^{[q_m(n)]_m} f_m\;d\mu,\;\;n\in\N,
\end{equation*}
where $[x]_{i}=([x_1]_{i,1},\ldots,[x_\ell]_{i,\ell})$ and $[\cdot]_{i,1},\ldots,[\cdot]_{i,\ell}$ are any of $\lfloor\cdot\rfloor$, $\lceil\cdot\rceil$, or $[\cdot]$.
\end{remark}

\medskip

\paragraph{\textbf{Acknowledgements.}}~The fourth author is supported by the National Science Foundation under grant number DMS~1901453.

 \section{Proof of main result}\label{Intermediate}

We start by proving a theorem concerning flows, which stands halfway in between Theorems~\ref{thm_B} and ~\ref{thm:main}.
The idea behind this result is that for a real polynomial $q(x)=a_d x^d+\ldots+a_1 x+a_0\in \mathbb{R}[x]$  and  a measure presenting flow $(S^t)_{t\in \mathbb{R}}$ we can write $S^{q(n)}=\left(S^{a_d}\right)^{n^d}\cdots\left(S^{a_0}\right)^{1},$ an expression which can be handled by \cref{thm_B}.

\begin{theorem}\label{thm:flow}
    For every $\ell, m,d\in \mathbb{N}$ there exists $k\in\N$ with the following property.
For any polynomials $q_1,\ldots,q_m\colon\R\to\R^\ell$ with degree at most $d$, commuting measure preserving $\R^\ell$-actions $S_1,\dots,S_m$ on a probability space $(X,\mathcal{B},\mu)$, functions $f_0,f_1,\ldots,f_m\in L^\infty(X)$, $\varepsilon>0,$ $r\in \mathbb{N}$, and $s\in \mathbb{Z},$ 
letting \begin{equation}\label{eq_flow}
    \alpha(n) = \int_X f_0 \cdot S_{1}^{q_{ 1}(n)} f_1 \cdots S_m^{q_{m}(n)}f_m \, d \mu,
\end{equation}
there exists a $k$-step nilsequence $\psi$ satisfying \eqref{eq_weakBHK} and \eqref{primes_ap}. In the special case $d=1$ one can choose $k=m$.
\end{theorem}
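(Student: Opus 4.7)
The plan is to reduce Theorem \ref{thm:flow} to Theorem \ref{thm_B} (in its commuting-action form from Remark \ref{remark_commuting}) via the decomposition alluded to before the statement. Expanding each $\R^\ell$-valued polynomial as $q_i(n) = \sum_{k=0}^{d} a_{i,k}\, n^k$ with $a_{i,k}\in\R^\ell$, I would set $R_{i,k} \coloneqq S_i^{a_{i,k}}$, so that each $R_{i,k}$ is a single measure-preserving transformation of $X$. Because the flows $S_1,\ldots,S_m$ commute by assumption and each $S_i$ commutes with itself, the entire family $\{R_{i,k} : 1\le i\le m,\ 0\le k\le d\}$ consists of pairwise commuting transformations. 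Using that $n^k\in\Z$ for $n\in\N$, the group law for the $\R^\ell$-action $S_i$ then yields
\[ S_i^{q_i(n)} \;=\; \prod_{k=0}^{d} S_i^{a_{i,k} n^k} \;=\; \prod_{k=0}^{d} R_{i,k}^{n^k}. \]

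Next, for each $i\in\{1,\dots,m\}$, I would bundle $R_{i,0},\dots,R_{i,d}$ into a single $\Z^{d+1}$-action $V_i$ on $X$ via
\[ V_i^{(n_0,n_1,\dots,n_d)} \;\coloneqq\; R_{i,0}^{n_0} R_{i,1}^{n_1}\cdots R_{i,d}^{n_d}, \]
and introduce the auxiliary integer-valued vector polynomial $\tilde q\colon\R\to\Z^{d+1}$ of degree $d$ given by $\tilde q(n) \coloneqq (1,n,n^2,\dots,n^d)$. The actions $V_1,\dots,V_m$ are then commuting $\Z^{d+1}$-actions on $X$, and $V_i^{\tilde q(n)}=S_i^{q_i(n)}$, so the correlation sequence in \eqref{eq_flow} rewrites as
\[ \alpha(n) \;=\; \int_X f_0\cdot V_1^{\tilde q(n)}f_1\cdots V_m^{\tilde q(n)}f_m\, d\mu. \]
This is exactly the form to which Theorem \ref{thm_B} applies in its commuting-action version (Remark \ref{remark_commuting}), with ambient dimension $d+1$ in place of $\ell$, the same value of $m$, and the same degree $d$. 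Invoking that theorem produces a $k$-step nilsequence $\psi$ satisfying \eqref{eq_weakBHK} and \eqref{primes_ap}, with $k$ depending only on $d+1$, $m$, and $d$, hence only on $m$ and $d$.

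For the linear case $d=1$, the auxiliary polynomial $\tilde q(n)=(1,n)$ is itself of degree $1$, so the special linear-case conclusion of Theorem \ref{thm_B} directly yields $k=m$. Since the whole argument is a bookkeeping reduction that offloads all analytic content to the blackbox Theorem \ref{thm_B}, I do not expect any substantive obstacle. The one subtle point worth flagging is the identity $S_i^{a_{i,k}n^k}=R_{i,k}^{n^k}$: this is immediate from the $\R^\ell$-action group law once one observes that $n^k$ is an integer, and it is precisely the step where the real-parameter flow degenerates into discrete $\Z$-actions suitable for Theorem \ref{thm_B}.
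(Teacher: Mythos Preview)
Your proposal is correct and follows essentially the same route as the paper: expand each $q_i$ in powers of $n$, replace the flow $S_i^{q_i(n)}$ by iterates of the commuting transformations $S_i^{a_{i,k}}$, bundle them into a $\Z^{d+1}$-action, and apply Theorem~\ref{thm_B} via Remark~\ref{remark_commuting} with the polynomial $\tilde q(n)=(1,n,\dots,n^d)$. The paper inserts an extra (unnecessary) step of decomposing each coefficient $a_{i,k}\in\R^\ell$ along the coordinate basis $e_1,\dots,e_\ell$ before recombining, but after that recombination its transformations $T_{i,h}$ coincide with your $R_{i,h}$, so the arguments are identical.
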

\begin{proof}
    For each $i\in[1,m]$, let $q_i=(q_{i,1},\dots,q_{i,\ell})$ for some $q_{i,j}\in\R[x]$.
    Next, for each $j\in[1,\ell]$,  write $q_{i,j}(x) = \sum_{h = 0}^d a_{i,j,h} x^h$, where the $a_{i,j,h}$'s are real numbers.
    Also, for each $j\in[1,\ell]$, let $e_j$ be the $j$-th vector of the canonical basis of $\R^\ell$ and let 
    $T_{i,j,h}$ be the measure preserving transformation defined by $T_{i,j,h}=S_{i}^{a_{i,j,h}e_j}$.
    Next, let $T_{i,h}$ be the composition $T_{i,h}=T_{i,1,h}\cdots T_{i,\ell,h}$, let $T_i$ be the $\Z^{d+1}$-action defined by $T_i^{(n_0,\dots,n_d)}=T_{i,0}^{n_0}\cdots T_{i,d}^{n_d},$ and let  $q\colon\Z\to\Z^{d+1}$ be the polynomial $q(n)=(1,n,\dots,n^d)$. 
    
    With this setup, for each $i\in[1,m]$ and $n\in\N$, we have
    \[
        S_i^{q_i(n)}
        = 
        \prod_{j=1}^\ell S_{i}^{q_{i,j}(n)e_j}
        =
        \prod_{j=1}^\ell\prod_{h=0}^d T_{i,j,h}^{n^h}
        =
        \prod_{h=0}^dT_{i,h}^{n^h}
        =
        T_i^{q(n)}.  
    \]
    Since the $\R^\ell$-actions $S_1,\dots, S_m$ commute, so do the $\Z^{d+1}$-actions $T_1,\dots,T_m$. 
    This implies that the multicorrelation sequence $\alpha$ can be represented by an expression of the form \eqref{eq_multicorrelationcommuting}. The conclusion now follows directly from \cref{thm_B} and \cref{remark_commuting}.
\end{proof}

Next we need a result concerning the distribution of real polynomials.

\begin{lemma}
\label{lem:equidistributed}
    Let $q \in \R[x]$ be a non-constant real polynomial, $r \in \N$ and $s \in \Z$. Then, denoting by $\{\cdot\}$ the fractional part, we have
    \[
        \lim_{\delta \to 0^+} \lim_{N-M \to \infty} \frac1{N-M}\left|\Big\{n \in [M, N): \big\{q(n)\big\} \in [1 - \delta, 1) \Big\}\right| = 0,
    \]
    and
    \[
        \lim_{\delta \to 0^+} \lim_{N \to \infty} \frac1{\pi(N)}\left|\Big\{p \in \P \cap [1,N]: \big\{q(rp + s)\big\} \in [1 - \delta, 1)\Big\}\right| = 0.
    \]
\end{lemma}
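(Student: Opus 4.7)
The plan is to dichotomize on whether the polynomial $q(x)=a_dx^d+\dots+a_1x+a_0$ has at least one irrational coefficient among $a_1,\dots,a_d$. Before splitting into cases, I would first record how the substitution $x\mapsto rx+s$ interacts with this dichotomy. Expanding $\tilde{q}(x):=q(rx+s)=\sum_{j}\tilde{a}_j x^j$, one computes that for $j\ge 1$,
\[
\tilde{a}_j \;=\; r^{j}\sum_{i=j}^{d} a_i\binom{i}{j}s^{i-j},
\]
which is a $\Z$-linear combination of $a_1,\dots,a_d$. Consequently, all non-constant coefficients of $\tilde{q}$ are rational whenever those of $q$ are; conversely, if $j$ is the largest index with $a_j\notin\Q$, then $\tilde{a}_j=r^{j}a_j+(\text{rational})$, which is irrational because $r\neq 0$.

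If $a_1,\dots,a_d\in\Q$, then $n\mapsto \{q(n)\}$ takes only finitely many values in some set $F\subset[0,1)$. Setting $c:=\max F<1$, the interval $[1-\delta,1)$ is disjoint from $F$ for every $\delta<1-c$, so the set in the first limit of the lemma is empty for such $\delta$. The same reasoning applies to the primes statement because, by the preceding paragraph, $n\mapsto q(rn+s)$ also has only rational non-constant coefficients.

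If instead $q$ has at least one irrational non-constant coefficient, then Weyl's equidistribution theorem implies that $\bigl(q(n)\bigr)_{n\in\Z}$ is equidistributed modulo $1$, so
\[
\lim_{N-M\to\infty}\frac{1}{N-M}\bigl|\{n\in[M,N):\{q(n)\}\in[1-\delta,1)\}\bigr|\;=\;\delta,
\]
which vanishes as $\delta\to 0^+$. By the first paragraph, $\tilde{q}$ also has an irrational non-constant coefficient, so I would invoke the Vinogradov--Rhin theorem on equidistribution of polynomial sequences along primes to conclude that $\bigl(\tilde{q}(p)\bigr)_{p\in\P}$ is equidistributed modulo $1$ as well, yielding the analogous identity with density $\delta$ and $\pi(N)$ in the denominator.

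The only non-elementary input is the equidistribution of polynomial sequences evaluated at primes; everything else is a direct case analysis together with the short calculation tracking coefficients under the affine substitution $x\mapsto rx+s$. I do not anticipate a significant obstacle in executing this plan.
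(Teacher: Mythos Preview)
Your proposal is correct and follows essentially the same route as the paper: dichotomize on whether $q-q(0)$ has an irrational coefficient, invoke Weyl and Rhin in the irrational case to get density exactly $\delta$, and observe in the rational case that $\{q(n)\}$ ranges over a finite set bounded away from $1$. The only difference is that you spell out explicitly why the dichotomy is preserved under $x\mapsto rx+s$, a point the paper leaves implicit when citing Rhin's theorem.
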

\begin{proof}
    Let
    \[
        A(\delta) = \lim_{N-M \to \infty} \frac1{N-M}\left|\Big\{n \in [M, N): \big\{q(n)\big\} \in [1 - \delta, 1) \Big\}\right|,
    \]
    and
    \[
        B(\delta) = \lim_{N \to \infty} \frac1{\pi(N)}\left|\Big\{p \in \P \cap [1,N]: \big\{q(rp + s)\big\} \in [1 - \delta, 1)\Big\}\right|.
    \]
    If $q-q(0)$ has an irrational coefficient, then by Weyl's Uniform Distribution Theorem \cite{Weyl16} and Rhin's Theorem \cite{Rhin_1973} we have $
        A(\delta) = B(\delta) = \delta$
    which approach $0$ as $\delta \to 0^+$.

    Assume otherwise that $q \in \R[x]$ satisfies $q-q(0)\in\Q[x]$,  say $q(x) = q(0)+b^{-1}\sum_{j = 1}^{\ell} a_{j} x^j$ where $b\in \N,$  $a_j \in \Z$ for $1 \leq j \leq \ell$, and $q(0)\in\R$. It follows that for all $n \in \N$,
    \[
        q(n)-q(0) \bmod 1 \in \left\{ 0, \frac{1}{b}, \frac{2}{b}, \ldots, \frac{b-1}{b} \right\}.
    \]
    In particular, the fractional part $\{q(n)\}$ takes only finitely many values.
    Therefore, if $\delta$ is small enough, for every $n \in \N$ we have $\{q(n)\} \notin [1 - \delta, 1)$ and hence $A(\delta) = B(\delta) = 0,$ which implies the desired conclusion. 
\end{proof}

For the proof of \cref{thm:main} we adapt arguments from \cite{Koutsogiannis_primes_2018, Koutsogiannis_multi_2018}, i.e., we use a multidimensional suspension flow to approximate $\alpha$ by a multicorrelation sequence of the form \eqref{eq_flow}. The arising error consists of terms of the form $1_{\{n\in\N:\;q(n)\in [1-\delta,1)\}}$ that can be controlled by \cref{lem:equidistributed}.

\begin{proof}[Proof of \cref{thm:main}]
Given $\ell,m,d\in\N$, let
    $k$ be as guaranteed by \cref{thm:flow}.
    Let $q_1,\dots,q_m$, $T$, $f_0,\dots,f_m$, $\epsilon>0$, $r\in\N$, $s\in\Z$ and $\alpha$ be as in the statement.
    By multiplying each function by a constant if needed, we can assume without loss of generality that $\lVert f_i \rVert_{\infty} \leq 1$ for each $i\in[1,m]$.

    We start by considering a multidimensional suspension flow with a constant $1$ ceiling function.
    More precisely, let $Y\coloneqq X\times[0,1)^{m\times\ell}$ and $\nu=\mu\otimes\lambda$, where $\lambda$ denotes the Lebesgue measure on $[0,1)^{m\times\ell}$.
    For each $i\in[1,m]$ define the measure preserving $\R^\ell$-action $S_i$ on $(Y,\nu)$ as follows: for any $t\in\R^\ell$ and  $(x;b_1,\dots,b_m)\in Y=X\times\big([0,1)^\ell\big)^m,$ let
    $$S_i^t(x;b_1,\dots,b_m)
    \coloneqq
    \big(T^{\lfloor b_i+t\rfloor}x; b_1,\dots,b_{i-1},\{b_i+t\},b_{i+1},\dots,b_m\big),$$
    where $\{u\}\coloneqq u-\lfloor u\rfloor$ for any $u\in\R^\ell$.
    Observe that the actions $S_1,\dots,S_m$ commute.

    Let $\pi\colon Y\to X$ be the natural projection and $\delta>0$ a small parameter to be determined later.
    For each $i\in[1,m]$ let $\hat f_i\in L^\infty(Y)$ be the composition $\hat f_i\coloneqq f_i\circ\pi$, and $\hat f_0\coloneqq 1_{X\times[0,\delta]^{m\times\ell}}\cdot f_0\circ\pi$.
    Define
    \[
        \tilde{\alpha}(n) = \int_Y \hat{f}_0 \cdot S_1^{q_1(n)} \hat{f}_1 \cdots S_m^{q_m(n)} \hat{f}_m \; d\nu.
    \]
    By \cref{thm:flow} there exists a $k$-step nilsequence $\tilde\psi$ such that
    \begin{equation}\label{eq_proof5}
        \lim_{N-M \to \infty} \frac{1}{N-M} \sum_{n=M}^{N-1} |\tilde{\alpha}(n) - \tilde\psi(n)| \leq \delta^{\ell m}\epsilon/2,
    \end{equation}
    and
    \begin{equation}\label{eq_proof6}
        \lim_{N \to \infty} \frac{1}{\pi(N)} \sum_{p \in \P \cap [1,N]} |\tilde{\alpha}(rp + s) - \tilde\psi(rp + s) | \leq \delta^{\ell m}\epsilon/2.
    \end{equation}
    On the other hand,
    $$\tilde\alpha(n)
    =
    \int_{[0,\delta]^{\ell m}}
    \int_X f_0(x)
    f_1\Big(T^{\lfloor q_1(n)+b_1\rfloor}x\Big)\cdots f_m\Big(T^{\lfloor q_m(n)+b_m\rfloor}x\Big)\,d\mu(x)\,d\lambda(b_1,\dots,b_m),$$
    which implies
    \begin{equation}\label{eq_alphacomparison}
        \alpha(n) - \frac{ \tilde{\alpha}(n)}{\delta^{\ell m}} =
         \frac1{\delta^{\ell m}}\int_{[0, \delta]^{\ell m}} \int_X f_0(x) \left( \prod_{i=1}^{m} f_i\Big(T^{\lfloor q_i(n) \rfloor} x\Big) - \prod_{i=1}^{m} f_i\Big(T^{\lfloor q_i(n) + b_i \rfloor} x\Big) \right)\, d \mu \, d \lambda.
    \end{equation}
    In particular, it follows from \eqref{eq_alphacomparison} that $|\alpha(n)-\delta^{-\ell m}\tilde\alpha(n)|\leq2$ for all $n\in\N$.
    If $b_i\in[0,\delta]^\ell$ and $\big\{q_i(n)\big\}\in[0,1-\delta)^\ell$ then $\lfloor q_i(n)+b_i\rfloor=\lfloor q_i(n)\rfloor$.
    Therefore \eqref{eq_alphacomparison} also implies that $\alpha(n)=\delta^{-\ell m}\tilde\alpha(n)$ whenever
    $$n\notin \Big\{n\in\N:\big\{q_i(n)\big\}\in[1-\delta,1)^\ell\text{ for some }i\in[1,m]\Big\}.$$
    In view of \cref{lem:equidistributed}, by choosing $\delta$ small enough, we have
    \begin{equation}\label{eq_proof7}\lim_{N-M \to \infty} \frac{1}{N-M} \sum_{n=M}^{N-1} \left|\alpha(n) -  \delta^{-\ell m} \tilde{\alpha}(n) \right|<\frac\epsilon2\end{equation}
    and
    \begin{equation}\label{eq_proof8}
         \lim_{N \to \infty} \frac{1}{\pi(N)} \sum_{p \in \P \cap [1,N]} \left|\alpha(rp + s) -  \delta^{-\ell m} \tilde{\alpha}(rp + s) \right|<\frac\epsilon2.
    \end{equation}
     Letting $\psi=\delta^{-m\ell}\tilde\psi$ and combining \eqref{eq_proof5} with \eqref{eq_proof7} and \eqref{eq_proof6} with \eqref{eq_proof8} we obtain the desired conclusion.
\end{proof}

\begin{remark}
As it was already mentioned in \cref{sec_intro}, it is an open problem whether one can improve upon the approximation in Frantzikinakis' main result in \cite{Frantzikinakis15b} and \cref{thm_B} and take $\epsilon=0$ in \eqref{eq_weakBHK} and \eqref{primes_ap} (see \cref{question_zeroepsilon} below).
However, as the following example shows, in the case of \cref{thm:main} it is not possible to improve upon the approximation in that manner.
\end{remark}

\begin{example}\label{example}
Take $X=\T\coloneqq\R/\Z$, $T(x)=x+1/\sqrt{2}$, $q(n)=\sqrt{2}n$, $f_0(x)=e(x)$ and $f_1(x)=e(-x)$, where $e(x)\coloneqq e^{2\pi ix}$. Then we have
\begin{eqnarray*}
\alpha(n) & = &\int f_0\cdot T^{\lfloor q(n)\rfloor}f_1\, d\mu
    ~=~\int e(x) e\left(-x-\frac{1}{\sqrt{2}} \lfloor \sqrt{2}n\rfloor\right)\,dx \\
    & = & e \left( - \frac{1}{\sqrt{2}} \lfloor \sqrt{2} n \rfloor \right)
     ~=~ e\left(\frac{1}{\sqrt{2}}\{\sqrt{2}n\}\right).
\end{eqnarray*}
In particular, we can write $\alpha(n)$ as $F(T^n x_0)$ with $x_0 = 0 \in \T$ and $F(x) = e(\{x\}/\sqrt{2})$ for $x \in \T$. Assume for the sake of a contradiction that there exists a uniform limit of nilsequences $\phi$ for which
\begin{equation}
\label{eq:nil+null}
    \lim_{N \to \infty} \frac{1}{N} \sum_{n=1}^N |\alpha(n) - \phi(n)| = 0.
\end{equation}
By \cite[Lemma 18]{Host_Maass2007}, $\phi$ can be written as $\phi(n) = G(S^n y_0)$ for all $n \in \N,$ where $G$ is a continuous function on an inverse limit of nilsystems $(Y, S)$ and $y_0 \in Y$.

We claim that $\alpha(n) = \phi(n)$ for all $n \in \N$. 
If not, then there exists $\delta > 0$ and  $n_0 \in \N$ such that
\begin{equation}
\label{eq:not_eq}
    |\alpha(n_0) - \phi(n_0)| = |F(T^{n_0} x_0) - G(S^{n_0} y_0)| \geq \delta.
\end{equation}
Since the system $(X \times Y, T \times S)$ is the product of two distal systems, is a distal system itself. This implies that the point $(T^{n_0} x_0, S^{n_0} y_0)$ is uniformly recurrent, i.e., the sequence $(T^n x_0, S^n y_0)$ visits any neighborhood of $(T^{n_0} x_0, S^{n_0} y_0)$ in a syndetic set. 
This fact together with \eqref{eq:not_eq} and the fact that both the real and imaginary parts of $F$ are almost everywhere continuous and  semicontinuous imply that the set
\[
    \left\{n \in \N: |F(T^n x_0) - G(S^n y_0)| \geq \delta/2\right\}
\]
is syndetic, which contradicts \eqref{eq:nil+null}. Hence $\alpha(n) = \phi(n)$ for all $n \in \N$. However, by \cite[Proposition 4.2.5]{Haland92}, the sequence $\alpha$ is not a distal sequence; in particular, it is not a uniform limit of nilsequences, contradicting our assumption.
\end{example}

\section{Open questions}
\label{sec:open_questions}

We close this article with three open questions. \cref{thm:main} provides an approximation result of multicorrelation sequences along an integer polynomial of degree one, evaluated at primes. We can ask whether a similar result is true along other classes of sequences. 

\begin{question}\label{q1}
    Let $q \in \R[x]$ be a non-constant real polynomial, $c > 0$, and $p_n$ denote the n-th prime. Suppose $r_n = q(n), q(p_n), \lfloor n^c \rfloor$ or $\lfloor p_n^c \rfloor$ for $n \in \N$. Is it true that for any $\alpha$ as in \eqref{eq:alpha} and $\epsilon > 0$, there exists a nilsequence $\psi$ satisfying
    \[
        \lim_{N\to\infty} \frac{1}{N} \sum_{n = 1}^N |\alpha(r_n) - \psi(r_n)| \leq \epsilon?
   \] 
\end{question}

Variants of the following question have appeared several times in the literature, e.g., \cite[Remark after Theorem 1.1]{Frantzikinakis15b}, \cite[Problem 20]{Frantzikinakis2016}, \cite[Problem 1]{Frantzikinakis_Host_2018}, and \cite[Page 398]{Host_Kra_18}.

\begin{question}\label{question_zeroepsilon}
  Let $\alpha$ be as in \eqref{eq_multicorrelation}.
  Does there exist a  uniform limit of nilsequences $\phi$ such that
  \[
    \lim_{N-M\to\infty}\frac1{N-M}\sum_{n=M}^{N-1}\big|\alpha(n)-\phi(n)\big|=0?
  \]
\end{question}

As mentioned in \cref{example}, the answer to \cref{question_zeroepsilon} is negative when $\alpha$ is a multicorrelation sequence as in \eqref{eq:alpha}. Nevertheless, it makes sense to ask for the following modification of it.

\begin{question}
    Let $\alpha$ be as in \eqref{eq:alpha}.
  Does there exist a 
  uniform limit of \emph{Riemann integrable nilsequences} $\phi$ satisfying
  \[
    \lim_{N-M\to\infty}\frac1{N-M}\sum_{n=M}^{N-1}\big|\alpha(n)-\phi(n)\big|=0?
  \]
  Here we say that $\phi$ is a uniform limit of Riemann integrable nilsequences if for every $\epsilon>0$ there exists a nilmanifold $X=G/\Gamma$, a point $x\in X$, $g\in G$ and a Riemann integrable function\footnote{A function $F$ is \emph{Riemann integrable} on a nilmanifold  if its points of discontinuity is a null set with respect to the Haar measure.} $F\colon X\to\C$ such that $\sup_{n\in\N}|\phi(n)-F(g^nx)|<\epsilon$.
\end{question}

\medskip

\bibliographystyle{plain}
\bibliography{refs}
\end{document}